\newtheorem{teor}[subsection]{Theorem}
\newtheorem{cor} [subsection]{Corollary}
\newcommand{\Res}{Res}
\def\Gal{\operatorname{Gal}}
\def\Res{\operatorname{Res}}
\def\Ind{\operatorname{Ind}}
\def\Ker{\operatorname{Ker}}
\def\SL{\operatorname{SL}}
\def\GL{\operatorname{GL}}
\numberwithin{equation}{section}
\begin{document}
\selectlanguage{english}
\frenchspacing

\large
\begin{center}
\textbf{Artin L-functions to almost monomial Galois groups}

Mircea Cimpoea\c s and Florin Nicolae
\end{center}
\normalsize

\begin{abstract}

If $K/\mathbb Q$ is a finite Galois extension with an almost monomial Galois group and if 
 $s_0\in\mathbb C\setminus\{1\}$ is not a common zero for any two Artin L-functions associated to distinct complex irreducible characters of the 
Galois group then all Artin L-functions of $K/\mathbb Q$ are holomorphic at  $s_0$. We present examples and basic properties of almost monomial groups.

\noindent \textbf{Keywords:} Artin L-function; almost monomial group

\noindent \textbf{2010 Mathematics Subject
Classification:} 11R42; 20C15.
\end{abstract}

\section*{Introduction}

A finite group $G$ is called {\em almost monomial} if for every 
distinct complex irreducible characters $\chi$ and $\psi$ of  $G$ there exist a subgroup $H$ of $G$ and a linear character $\varphi$ of $H$ such 
that the induced character $\Ind_H^G\varphi$ contains  $\chi$ and does not contain  $\psi$. This definition appears in \cite{monat} in connection with the 
study of the holomorphy of Artin L-functions associated to a finite Galois extension of $\mathbb Q$ at a point in the complex plane. Let $K/\mathbb Q$ 
be a finite Galois extension with the Galois group 
$G$. Let $\chi_1,\ldots,\chi_r$ be the complex irreducible characters of $G$, $f_1=L(s, \chi_1),\ldots,f_r=L(s,\chi_r)$ 
the corresponding Artin L-functions. Let $s_0\in\mathbb C\setminus\{1\}$. Our main result is Theorem 1.1: if 
$G$ is almost monomial and $s_0$ is not a common zero for any two distinct L-functions $f_k$ and $f_l$ then 
all Artin L-functions of $K/\mathbb Q$ are holomorphic at $s_0$. In the second 
section of this paper we give examples and prove some basic properties of almost monomial groups. Every monomial group is almost monomial. The special 
linear group $\SL_2(\mathbb F_3)$ is almost monomial, but the general linear group $\GL_2(\mathbb F_3)$ is not. The alternating group  $A_n$ is almost 
monomial for $n=1,2,3,4,5,9$ and is not for $n=6,7,10,11,12$. In Theorem 2.1 we prove that the symmetric group is almost monomial for any $n\geq 1$. 
A subgroup of an almost monomial group is not necessarily almost monomial. In Theorem 2.2 we prove that a factor group of an almost monomial group is 
almost monomial. A finite product of finite groups is 
almost monomial if and only if each of them is almost monomial.  

\section{ The main result }

Let $K/\mathbb Q$ be a finite Galois extension. 
For the character $\chi$ of a representation of the Galois group $G:=\Gal(K/\mathbb Q)$
on a finite dimensional complex vector space let $L(s,\chi):=L(s,\chi,K/\mathbb Q)$ be the corresponding Artin L-function 
(\cite[P.296]{artin2}). 
Artin conjectured that $L(s,\chi)$ is holomorphic in $\mathbb C\setminus \{1\}$.   
Brauer proved that $L(s,\chi)$ is meromorphic in $\mathbb C$.
Let $\chi_1,\ldots,\chi_r$ be the irreducible characters of $G$, $f_1=L(s, \chi_1),\ldots,f_r=L(s,\chi_r)$ the corresponding Artin L-functions,
$$Ar:=\{f_1^{k_1}\cdot\ldots\cdot f_r^{k_r}\mid k_1\geq 0,\ldots,k_r\geq 0\}$$
the multiplicative semigroup of all  L-functions. For $s_0\in\mathbb C,s_0\neq 1$ let $ {\it Hol}(s_0)$ be the
subsemigroup of $Ar$ consisting of the L-functions which are holomorphic at
$s_0$. Artin's conjecture is:
$$ {\it Hol}(s_0)=Ar.$$

In (\cite{monat}) it was proved the following 

\noindent {\bf Theorem.} {\it 
If $G=\Gal(K/\mathbb Q)$ is almost monomial, then the following assertions are equivalent:

1) Artin's conjecture is true: $ {\it Hol}(s_0)=Ar.$

2) The semigroup $ {\it Hol}(s_0)$ is factorial.}

\noindent Our main result is

\begin{teor}
If $G$ is almost monomial and $s_0$ is not a common zero for any two distinct L-functions $f_k$ and $f_l$ then all Artin L-functions of $K/\mathbb Q$ 
are holomorphic at $s_0$.  
\end{teor}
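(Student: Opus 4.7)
The plan is to recast the conclusion in terms of the orders $n_i := \ord_{s=s_0} f_i \in \mathbb{Z}$, so that negative values of $n_i$ correspond to poles of $f_i$. The hypothesis then reads: the set $J := \{i : n_i \geq 1\}$ satisfies $|J| \leq 1$, and the conclusion to prove is $n_i \geq 0$ for every $i$. The case $r = 1$ is trivial (then $f_1$ is the Riemann zeta function), so I assume $r \geq 2$.

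The key tool beyond almost monomiality itself is the classical identification of the Artin L-function $L(s, \Ind_H^G \varphi, K/\mathbb{Q})$ of an induced linear character with the abelian Hecke L-function $L(s, \varphi, K/K^H)$, which is holomorphic on $\mathbb{C} \setminus \{1\}$. Decomposing $\Ind_H^G \varphi = \sum_j a_j \chi_j$ with $a_j = \langle \Ind_H^G \varphi, \chi_j \rangle \in \mathbb{Z}_{\geq 0}$, this translates into the numerical inequality
\[
\sum_j a_j n_j \;\geq\; 0
\]
for every subgroup $H \leq G$ and every linear character $\varphi$ of $H$.

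I would then argue by contradiction. Suppose $n_l < 0$ for some $l$, so $l \notin J$. If $J = \emptyset$, pick any $k \neq l$ and apply almost monomiality to the ordered pair $(\chi_l, \chi_k)$: this yields $H, \varphi$ with $a_l \geq 1$ and $a_k = 0$, and then in $\sum_j a_j n_j$ every term is nonpositive while the $j=l$ term is strictly negative---contradicting the displayed inequality. If instead $J = \{m\}$, then necessarily $m \neq l$, and applying almost monomiality to $(\chi_l, \chi_m)$ excludes precisely the unique index at which $n_j > 0$ from the induction; the $j=m$ contribution vanishes, the $j=l$ contribution is strictly negative, and for all other $j$ one has $n_j \leq 0$, producing the same contradiction.

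The conceptual content lies entirely in how almost monomiality is deployed: $\chi_l$ (the carrier of the suspected pole) must be the character forced into the induced character, and the sole element of $J$---or an arbitrary other index when $J$ is empty---must be the character forced out. Once that alignment is fixed, every summand in $\sum_j a_j n_j$ has a definite sign and the contradiction is immediate. I do not expect any substantive obstacle; the only pitfall is accidentally reversing the roles in almost monomiality, which produces an inequality free of $n_l$ and gives no information about the pole.
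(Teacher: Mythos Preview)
Your proof is correct and follows essentially the same line as the paper's: both assume a pole of some $f_l$ (the paper calls it $f_m$), invoke almost monomiality to obtain an induced linear character containing $\chi_l$ but omitting a designated zero, and then use the holomorphy of the resulting Hecke L-function $L(s,\lambda,K/K^H)$ to reach a contradiction. The only cosmetic difference is organizational: the paper first invokes the Dedekind zeta function to locate one zero $f_k$ and then exhibits a second zero $f_l$ inside the induced L-function, whereas you phrase everything via the orders $n_i$ and split into the cases $|J|=0$ and $|J|=1$.
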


\begin{proof}
Suppose that $s_0$ is a pole of some L-function $f_m$.   The
Dedekind zeta function of the field $K$ has the decomposition 
$$\zeta_{K}(s):=f_1^{d_1}\cdots f_r^{d_r},$$ where $d_j=\chi_j(1)$ for $1\leq j\leq r$. Since
$\zeta_{K}(s)$ is holomorphic at $s_0$ there exists $k\neq m$ such that 
$$f_k(s_0)=0.$$
Since $G$ is almost monomial there exist a subgroup $H$ of $G$ and a linear character $\lambda$ of $H$ such that 
$\Ind_H^G\lambda$ contains $\chi_m$ and does not contain  $\chi_k$. It holds that 
$$L(s, \Ind_H^G\lambda,K/\mathbb Q)=L(s, \lambda, K/F)$$
where $F$ is the fixed field of $H$. Since $\lambda$ is a linear character the L-function $L(s, \lambda, K/F)$ is a Hecke L-function hence is holomorphic at  $s_0$. It follows that $L(s, \Ind_H^G\lambda,K/\mathbb Q)$ is holomorphic at  $s_0$. Since $\Ind_H^G\lambda$ contains $\chi_m$ the 
 L-function $f_m$ is a factor of the L-function $L(s, \Ind_H^G\lambda,K/\mathbb Q)$. Since $f_m$ has a pole at $s_0$ and $L(s, \Ind_H^G\lambda,K/\mathbb Q)$ 
is holomorphic at $s_0$ there exist $l\neq m$ such that $f_l$ is a factor of $L(s, \Ind_H^G\lambda,K/\mathbb Q)$ and 
 $$f_l(s_0)=0.$$
Since $\Ind_H^G\lambda$ does not contain  $\chi_k$ we have that 
$$k\neq l,$$
so $s_0$ is a common zero for the distinct L-functions $f_k$ and $f_l$, which contradicts the hypothesis.
\end{proof}

\begin{cor}
 If $G$ is almost monomial and $s_0$ is a pole for a L-function $f_m$ then $s_0$ is a zero for two distinct L-functions 
$f_k$ and $f_l$.
\end{cor}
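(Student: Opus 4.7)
The plan is to derive the corollary as the direct contrapositive of Theorem 1.1, with the almost monomiality of $G$ treated as a standing hypothesis in both statements. Theorem 1.1 says: if $s_0$ is not a common zero of any two distinct $f_k,f_l$, then every Artin L-function of $K/\mathbb Q$ is holomorphic at $s_0$. The corollary says: if some $f_m$ has a pole at $s_0$, then $s_0$ is a common zero of two distinct $f_k,f_l$. These are logically equivalent, so a one-line argument suffices: having a pole at $s_0$ negates the holomorphy conclusion of Theorem 1.1, hence by contraposition the remaining hypothesis must also fail, producing the desired pair with $f_k(s_0)=f_l(s_0)=0$ and $k\neq l$.

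For completeness I would also note that the two indices are in fact visible inside the proof of Theorem 1.1 itself, so the corollary can equally well be read off that proof rather than invoked formally. The index $k\neq m$ is produced from the identity $\zeta_{K}(s)=f_1^{d_1}\cdots f_r^{d_r}$ together with the holomorphy of the Dedekind zeta function at $s_0$: the pole of $f_m$ must be cancelled by a zero of some other factor $f_k$. The index $l\neq k$ is then produced from the almost monomial hypothesis: a subgroup $H\leq G$ and a linear character $\lambda$ of $H$ are chosen so that $\Ind_H^G\lambda$ contains $\chi_m$ but not $\chi_k$, and the fact that $L(s,\lambda,K/F)$ is a Hecke L-function (hence holomorphic at $s_0$) forces a further zero among the constituents of $\Ind_H^G\lambda$, which by construction is distinct from $\chi_k$.

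There is no genuine obstacle; the only thing to check is the matching of quantifiers in the contrapositive, namely that the ``two distinct L-functions'' produced by negating the hypothesis of Theorem 1.1 are precisely the pair $(f_k,f_l)$ demanded by the corollary, which is immediate from the wording. I would therefore write the proof as a single sentence applying Theorem 1.1 by contraposition.
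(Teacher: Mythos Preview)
Your proposal is correct and matches the paper's treatment: the paper states the corollary with no separate proof, evidently regarding it as the immediate contrapositive of Theorem~1.1 (and, as you observe, the indices $k$ and $l$ are already exhibited inside that proof). A one-line contrapositive argument is exactly what is intended.
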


\section{Almost monomial groups}

In this section we present examples and prove some basic properties of almost monomial groups.

The following function in GAP \cite{gap} determines if a group $G$ is almost monomial:

\noindent
gap$>$ IsAlmostMonomial:=function(g)\\
$>$ cc:=ConjugacyClassesSubgroups(g);\\
$>$ i:=Size(Irr(g));\\
$>$ M:=IdentityMat(i);\\
$>$ for x in cc do\\
$>$ \;\; y:=Representative(x); \\
$>$ \;\; for z in Irr(y) do\\
$>$ \;\;\;\;   if z[1]=1 then o:=InducedClassFunction(z,g);\\
$>$ \;\;\;\;\;\;   l:=ConstituentsOfCharacter(o);\\
$>$ \;\;\;\;\;\;   for j in [1..i] do for k in [1..i] do \\
$>$ \;\;\;\;\;\;   if (Irr(g)[j] in l)and not(Irr(g)[k] in l) then M[j][k]:=1; fi;\\
$>$ \;\;\;\;   od;od;fi;\\
$>$ \;\; od;\\
$>$ od;\\
$>$ fals:=0;\\
$>$ for j in [1..i] do for k in [1..i] do \\
$>$ \;\; if M[j][k]=0 then fals:=1; fi; \\
$>$ od; od;\\
$>$ if fals=0 then return(true); else return(false); fi;\\
$>$ end;;\\

\noindent
The following code in GAP searches for groups with small number of elements which are not almost monomial:

\noindent
gap$>$ for s in [1..1000] do\\
$>$  for t in [1..NrSmallGroups(s)] do\\
$>$    if not(IsAlmostMonomial(SmallGroup(s,t))) then \\
\;\;\;\;\;\;\;Print("Small(",s,",",t,") is not AM $\backslash n$"); \\
$>$    else Print("Small(",s,",",t,") is AM $\backslash n$");\\
$>$ fi; od; od;\\

 \noindent The computations show that

\begin{itemize}
 \item The Mathieu groups $M_{11}$, $M_{12}$, $M_{22}$, $M_{23}$ and $M_{24}$ are not almost monomial.
 \item The Higman-Sims group $HS$ is not almost monomial. The Hall-Janko group $J_2$ is not almost monomial.
 \item The special linear group $\SL_2(\mathbb F_3)$ is almost monomial, but the general linear group $\GL_2(\mathbb F_3)$ is not.
       Note that $\SL_2(\mathbb F_3)$ is also the smallest solvable group which is not monomial.
 \item The groups $\SL_2(\mathbb F_{2^k})$ are almost monomial for $1\leq k\leq 5$.
 \item The groups $\SL_2(\mathbb F_5)$, $\SL_2(\mathbb F_7)$, $\SL_2(\mathbb F_{3^2})$, $\SL_2(\mathbb F_{11})$, $\SL_2(\mathbb F_{13})$, 
       $\SL_2(\mathbb F_{17})$, $\SL_2(\mathbb F_{19})$, $\SL_2(\mathbb F_{23})$,  $\SL_2(\mathbb F_{5^2})$, $\SL_2(\mathbb F_{3^3})$, 
       $\SL_2(\mathbb F_{29})$, $\SL_2(\mathbb F_{31})$, $\SL_2(\mathbb F_{37})$ are not almost monomial.
 \item The groups $\SL_3(\mathbb F_2)$, $\SL_3(\mathbb F_3)$, $\SL_3(\mathbb F_{2^2})$, $\SL_3(\mathbb F_5)$ are not almost monomial.
\end{itemize}

\noindent
The following code searches for alternating and symmetric groups which are/are not almost monomial:

\noindent
gap$>$ for s in [1..10] do\\
$>$  if IsAlmostMonomial(AlternatingGroup(s)) then Print("A",s," is AM");\\
$>$  else Print("A",s," is not AM");fi;Print("$\backslash n$");od;\\
gap$>$ for s in [1..10] do\\
$>$  if IsAlmostMonomial(SymmetricGroup(s)) then Print("S",s," is AM");\\
$>$  else Print("S",s," is not AM");fi;Print("$\backslash n$");od;\\

The computation shows that $A_n$ is almost monomial for $n=1,2,3,4,5,9$ and is not for $n=6,7,10,11,12$. The symmetric group $S_n$ is almost 
monomial for $n\leq 12$. 

We prove that $S_n$ is almost monomial for any $n\geq 1$. For a partition $\alpha$ of $n$ let $S_{\alpha}$ be the corresponding Young subgroup (\cite[Section 1.3, p. 16]{james}).   
Let $IS_{\alpha}$ be the trivial character of $S_{\alpha}$: $IS_{\alpha}(\sigma)=1$ for all $\sigma\in S_{\alpha}$. Let $AS_{\alpha
}$ be the alternating character of $S_{\alpha}$: $AS_{\alpha}(\sigma)=\text{\rm sgn}\,\sigma$ for all $\sigma\in S_{\alpha}$, where 
$\text{\rm sgn}\,\sigma$ is the signature of the permutation $\sigma$. Let $\alpha'$ be the associated partition with $\alpha$ in the sense of \cite[Section 1.4, p. 22]{james}. 
By \cite[Theorem 2.1.3, p. 35]{james} the characters $\Ind_{S_{\alpha}}^{S_n} IS_{\alpha}$ and $\Ind_{S_{\alpha'}}^{S_n} AS_{\alpha'}$ have exactly one irreducible constituent in common, denoted $[\alpha]$. By \cite[Theorem 2.1.11, p. 37]{james} any irreducible character of $S_n$
is of the form $[\alpha]$ for a suitable partition $\alpha$ of $n$. 

\begin{teor}
 The group $S_n$ is almost monomial for any $n\geq 1$.
\end{teor}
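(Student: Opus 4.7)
The plan is to use directly the two distinguished induced characters attached to a partition $\alpha$, namely $\Ind_{S_{\alpha}}^{S_n} IS_{\alpha}$ and $\Ind_{S_{\alpha'}}^{S_n} AS_{\alpha'}$, together with the theorem of James that $[\alpha]$ is their unique common irreducible constituent. Both $IS_{\alpha}$ and $AS_{\alpha'}$ are linear characters (being the trivial and the sign character of a Young subgroup), so each of these induced characters is a legitimate candidate for a witness in the definition of almost monomial.

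Concretely, fix two distinct irreducible characters of $S_n$, write them as $[\alpha]$ and $[\beta]$ for partitions $\alpha\neq\beta$ of $n$, and consider the two induced characters above. Both contain $[\alpha]$. Since, by \cite[Theorem 2.1.3]{james}, the only irreducible constituent common to $\Ind_{S_{\alpha}}^{S_n} IS_{\alpha}$ and $\Ind_{S_{\alpha'}}^{S_n} AS_{\alpha'}$ is $[\alpha]$ itself, and since $[\beta]\neq[\alpha]$, the character $[\beta]$ fails to be a constituent of at least one of these two induced characters.

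Accordingly, if $[\beta]$ is not a constituent of $\Ind_{S_{\alpha}}^{S_n} IS_{\alpha}$, take $H:=S_{\alpha}$ and $\varphi:=IS_{\alpha}$; otherwise take $H:=S_{\alpha'}$ and $\varphi:=AS_{\alpha'}$. In either case $\varphi$ is linear, $\Ind_H^{S_n}\varphi$ contains $[\alpha]$, and it does not contain $[\beta]$. This verifies the definition of almost monomial for the pair $([\alpha],[\beta])$, and since the pair was arbitrary, $S_n$ is almost monomial.

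There is no genuine obstacle here beyond lining up James's result with the definition: the whole argument is a one-line consequence of the fact that the pair $(\Ind_{S_{\alpha}}^{S_n} IS_{\alpha},\ \Ind_{S_{\alpha'}}^{S_n} AS_{\alpha'})$ separates $[\alpha]$ from every other irreducible character of $S_n$, combined with the observation that $IS_{\alpha}$ and $AS_{\alpha'}$ are linear. The only care needed is to remember the edge case $n=1$ (where there is only one irreducible character and the condition is vacuous) and to note that the parametrization $\alpha\mapsto [\alpha]$ is a bijection between partitions of $n$ and $\Irr(S_n)$ by \cite[Theorem 2.1.11]{james}, so that distinct irreducible characters indeed correspond to distinct partitions.
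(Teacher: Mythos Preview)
Your proof is correct and follows essentially the same approach as the paper: both arguments use the pair of induced characters $\Ind_{S_{\alpha}}^{S_n} IS_{\alpha}$ and $\Ind_{S_{\alpha'}}^{S_n} AS_{\alpha'}$ together with James's theorem that $[\alpha]$ is their unique common irreducible constituent, and then select whichever of the two excludes $[\beta]$. The only differences are cosmetic---you frame it as ``$[\beta]$ must miss at least one of the two'' while the paper does an explicit case split---and your additional remarks on the bijection $\alpha\mapsto[\alpha]$ and the trivial case $n=1$ are harmless clarifications.
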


\begin{proof}
Let $\chi,\psi$ be two distinct irreducible characters of $S_n$. We have that $\chi=[\alpha]$ and $\psi=[\beta]$, where $\alpha,\beta$ are
two distinct partitions of $n$. If $\langle \Ind_{S_{\alpha}}^{S_n} IS_{\alpha},[\beta] \rangle = 0$ then by choosing $H:=S_{\alpha}$ and $\lambda:=IS_{\alpha}$
the definition of almost monomiality is satisfied. ( Here and in the sequel $\langle\,,\rangle$ is the usual scalar product of characters.) Assume that $\langle \Ind_{S_{\alpha}}^{S_n} IS_{\alpha},[\beta] \rangle \neq 0$. It follows that 
$\langle \Ind_{S_{\alpha'}}^{S_n} AS_{\alpha},[\beta] \rangle = 0$, otherwise $[\alpha]=[\beta]$, a contradiction. Choosing  $H:=S_{\alpha'}$ and $\lambda:=AS_{\alpha'}$
the definition of almost monomiality is again satisfied.
\end{proof}

If $H\leq G$ is a subgroup and $G$ is almost monomial then $H$ is not necessarily almost monomial, not even in the case when $H$ is normal.
For example if $G=S_6$ and $H=A_6 \unlhd G$ then $G$ is almost monomial but $H$ is not almost monomial. A factor group of an almost monomial group is 
almost monomial:

\begin{teor}
Let $N \unlhd G$ be a normal subgroup of the finite group $G$. If $G$ is almost monomial then $G/N$ is almost monomial.
\end{teor}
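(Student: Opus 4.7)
The plan is to use the inflation correspondence between $\Irr(G/N)$ and the irreducible characters of $G$ with $N$ in their kernel, and to push the almost-monomial data from $G$ down to $G/N$. Given two distinct irreducible characters $\bar\chi,\bar\psi$ of $G/N$, I would let $\chi,\psi\in\Irr(G)$ be their inflations; these are distinct irreducible characters of $G$, both trivial on $N$. Almost monomiality of $G$ then produces a subgroup $H\leq G$ and a linear character $\lambda$ of $H$ such that $\Ind_H^G\lambda$ contains $\chi$ but not $\psi$.

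The key step is to show that $H\cap N\subseteq\Ker\lambda$, so that $\lambda$ descends to a linear character $\bar\lambda$ of the subgroup $\bar H:=HN/N\cong H/(H\cap N)$ of $G/N$. By Frobenius reciprocity, $\lambda$ is an irreducible constituent of $\chi|_H$; since $N\subseteq\Ker\chi$, the restriction $\chi|_{H\cap N}$ equals $\chi(1)\cdot\mathbf{1}_{H\cap N}$, and by linear independence of the irreducible characters of $H\cap N$ every irreducible constituent $\eta$ of $\chi|_H$ must satisfy $\eta|_{H\cap N}=\eta(1)\mathbf{1}_{H\cap N}$, i.e.\ $H\cap N\subseteq\Ker\eta$. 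Applied to $\lambda$, this yields the desired trivialization, so $\bar\lambda$ is well defined on $\bar H$.

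It then remains to check that $\Ind_{\bar H}^{G/N}\bar\lambda$ contains $\bar\chi$ but not $\bar\psi$. For $\theta\in\{\chi,\psi\}$ I would use that $\bar\theta(hN)=\theta(h)$ (because $\theta$ is trivial on $N$) and that $\lambda$ and $\theta$ are both constant on cosets of $H\cap N$ in $H$, to rewrite the sum defining the inner product on $\bar H$ as a sum over all of $H$:
$$\langle\bar\lambda,\bar\theta|_{\bar H}\rangle_{\bar H}=\frac{1}{|H/(H\cap N)|}\sum_{h(H\cap N)}\lambda(h)\overline{\theta(h)}=\frac{1}{|H|}\sum_{h\in H}\lambda(h)\overline{\theta(h)}=\langle\Ind_H^G\lambda,\theta\rangle_G.$$
Hence the (non)vanishing required for $\bar\chi,\bar\psi$ on $G/N$ is inherited directly from the almost-monomial data for $\chi,\psi$ on $G$, showing that $G/N$ is almost monomial.

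The main obstacle I expect is precisely the first step of the second paragraph, namely recognizing that the linear character $\lambda$ produced by almost monomiality of $G$ is \emph{automatically} trivial on $H\cap N$ once $\chi$ is; once this is in hand, the descent to $HN/N$ is formal and the Frobenius-reciprocity identity reduces to a coset-counting rearrangement.
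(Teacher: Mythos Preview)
Your proposal is correct and follows essentially the same route as the paper: inflate $\bar\chi,\bar\psi$ to $\chi,\psi\in\Irr(G)$, use almost monomiality of $G$ to produce $(H,\lambda)$, show $H\cap N\subseteq\Ker\lambda$ so that $\lambda$ descends to $HN/N$, and then identify $\langle\Ind_{HN/N}^{G/N}\bar\lambda,\bar\theta\rangle$ with $\langle\Ind_H^G\lambda,\theta\rangle$ via the coset rewriting. The one cosmetic difference is in the key step: the paper argues $H\cap N\subseteq\Ker\lambda$ by contradiction through a double application of Frobenius reciprocity (via $\Ind_{H\cap N}^H(\lambda|_{H\cap N})$), whereas your positivity argument---that every irreducible constituent of $\chi|_H$ must restrict trivially to $H\cap N$ because $\chi|_{H\cap N}=\chi(1)\mathbf{1}_{H\cap N}$ and character decompositions have non-negative coefficients---is a slightly more direct packaging of the same idea.
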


\begin{proof}
Let $\tilde{\chi}, \tilde{\psi}$ be two distinct irreducible characters of $G/N$ and let $\chi,\psi$ be their corresponding irreducible characters of $G$. 
We have that $N\subset \Ker(\chi)$ and $N\subset \Ker(\psi)$. Since $G$ is almost monomial there exist a subgroup $H\leqslant G$ and a linear character $\lambda$ of $H$
such that $\langle \Ind_H^G \lambda,\chi \rangle \neq 0$ and $\langle \Ind_H^G \lambda,\psi \rangle = 0$. 

\noindent We prove that $H\cap N\subset \Ker(\lambda)$. If $H\cap N\not\subset \Ker(\lambda)$ then the restricted character $\Res^H_{H\cap N} \lambda$
is linear and not trivial on $H\cap N$, so $$\langle \Res^H_{H\cap N} \lambda, 1_{H\cap N}   \rangle = 0,$$ where $1_{H\cap N}$ is the trivial 
character of $H\cap N$. Since $N\subset \Ker(\chi)$ it follows that $$\Res^G_{H\cap N} \chi = \chi(1)1_{H\cap N},$$
hence 
$$\langle \Res^H_{H\cap N} \lambda, \Res^G_{H\cap N} \chi  \rangle =\langle \Res^H_{H\cap N} \lambda, \chi(1)1_{H\cap N}\rangle=\chi(1)\langle \Res^H_{H\cap N} \lambda, 
1_{H\cap N}\rangle=0$$ 
and, by Frobenius reciprocity,
$$\langle \Ind_{H\cap N}^H(\Res^H_{H\cap N} \lambda), \Res^G_{H} \chi  \rangle = 0.$$ On the other hand
$$\langle \Ind_{H\cap N}^H(\Res^H_{H\cap N} \lambda), \lambda \rangle = \langle \Res^H_{H\cap N} \lambda, \Res^H_{H\cap N} \lambda \rangle=1,$$ hence
$$\langle \lambda, \Res^G_{H} \chi  \rangle = 0   $$ and, by Frobenius reciprocity,
$$\langle \Ind^G_H \lambda, \chi \rangle =0,$$ in contradiction with  $\langle \Ind_H^G \lambda,\chi \rangle \neq 0.$

\noindent We define the group homomorphism 
$$\tilde{\lambda}: H/(H\cap N) \cong HN/N \to \mathbb C^*,\; \tilde{\lambda}(hN):=\lambda(h), h\in H,$$
which is a linear character of the subgroup $HN/N$ of $G/N$. By Frobenius reciprocity
\begin{align*}
& \langle \Ind^{G/N}_{HN/N} \tilde{\lambda}, \tilde{\chi} \rangle = \langle \tilde{\lambda}, \Res^{G/N}_{HN/N} \tilde{\chi} \rangle = 
\frac{|H\cap N|}{|H|} \sum_{\bar h \in HN/N} \tilde{\lambda}(\bar h)\cdot \overline{\tilde{\chi}(\bar h)} = \\
& = \frac{1}{|H|} \sum_{h \in H}  \lambda( h)\cdot \overline{\chi( h)} = \langle \lambda, \Res^{G}_{H} \chi \rangle = \langle \Ind^{G}_{H} \lambda, \chi \rangle \neq 0.
\end{align*}
Similarly $$\langle \Ind^{G/N}_{HN/N} \tilde{\lambda}, \tilde{\psi} \rangle = \langle \Ind^{G}_{H} \lambda, \psi \rangle = 0,$$ so $G/N$ is almost monomial.
\end{proof}

We state without proof 
 
\begin{teor}
Let $G,G'$ be two finite groups. The following assertions are equivalent:
\begin{enumerate}
 \item[(1)] $G,G'$ are almost monomial.
 \item[(2)] $G\times G'$ is almost monomial.
\end{enumerate}
\end{teor}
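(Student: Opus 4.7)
The plan rests on two standard facts about direct products. First, every irreducible character of $G\times G'$ is uniquely an outer tensor product $\chi\times\chi'$ with $\chi\in\Irr(G)$ and $\chi'\in\Irr(G')$; second, for subgroups $H\leq G$, $H'\leq G'$ and characters $\lambda$ of $H$, $\lambda'$ of $H'$,
\[
\Ind_{H\times H'}^{G\times G'}(\lambda\times\lambda') \;=\; (\Ind_H^G\lambda)\times(\Ind_{H'}^{G'}\lambda'),
\]
the outer product $\lambda\times\lambda'$ being linear precisely when both $\lambda$ and $\lambda'$ are.

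For $(1)\Rightarrow(2)$, I would fix two distinct irreducibles $\chi_1\times\chi_1'$, $\chi_2\times\chi_2'$ of $G\times G'$ and note that at least one component must differ, so after swapping the roles of $G$ and $G'$ I may assume $\chi_1\neq\chi_2$. Almost monomiality of $G$ then yields $H\leq G$ and a linear character $\lambda$ of $H$ with $\langle \Ind_H^G\lambda,\chi_1\rangle\neq 0$ and $\langle \Ind_H^G\lambda,\chi_2\rangle=0$. I would lift this to $G\times G'$ by setting $K:=H\times\{1\}\leq G\times G'$ and choosing the linear character $\mu:=\lambda\times 1_{\{1\}}$ of $K$. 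Since $\Ind_{\{1\}}^{G'}1_{\{1\}}$ is the regular character of $G'$, which contains every $\chi'\in\Irr(G')$ with strictly positive multiplicity $\chi'(1)$, the product formula gives
\[
\langle \Ind_K^{G\times G'}\mu,\;\chi_i\times\chi_i'\rangle \;=\; \langle \Ind_H^G\lambda,\chi_i\rangle\cdot\chi_i'(1),
\]
which is nonzero for $i=1$ and zero for $i=2$, as required.

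For $(2)\Rightarrow(1)$, I would simply invoke Theorem 2.2 applied to the normal subgroups $\{1\}\times G'$ and $G\times\{1\}$ of $G\times G'$, whose quotients are isomorphic to $G$ and $G'$ respectively; almost monomiality of $G\times G'$ then forces it for both factors. I do not anticipate a substantial obstacle: the whole content of the forward direction is the observation that trivially inducing on the $G'$-side produces the regular character, whose positivity on every irreducible never spoils the separation obtained on the $G$-side, while the backward direction is immediate from the quotient theorem already established.
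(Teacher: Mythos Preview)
Your argument is correct. For $(1)\Rightarrow(2)$ the key observation---that inducing the trivial character from $\{1\}$ to $G'$ gives the regular character, so every irreducible of $G'$ appears with positive multiplicity and hence the separation achieved on the $G$-side is never disturbed---is exactly the right idea, and the inner-product computation
\[
\langle \Ind_{H\times\{1\}}^{G\times G'}(\lambda\times 1),\,\chi_i\times\chi_i'\rangle
=\langle \Ind_H^G\lambda,\chi_i\rangle\cdot\langle \rho_{\mathrm{reg}}^{G'},\chi_i'\rangle
=\langle \Ind_H^G\lambda,\chi_i\rangle\cdot\chi_i'(1)
\]
is valid. The case distinction (use $G$ if $\chi_1\neq\chi_2$, otherwise use $G'$) is handled cleanly by your symmetry remark. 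For $(2)\Rightarrow(1)$, invoking Theorem~2.2 on the normal subgroups $\{1\}\times G'$ and $G\times\{1\}$ is the natural and correct route.

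There is, however, nothing to compare your proof against: the paper explicitly writes ``We state without proof'' immediately before this theorem and gives no argument whatsoever. Your proposal therefore \emph{supplies} a proof rather than reproducing or diverging from one already present.
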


{}

\vspace{2mm} \noindent {\footnotesize
\begin{minipage}[b]{15cm}
Mircea Cimpoea\c s, Simion Stoilow Institute of Mathematics of the Romanian Academy, Research unit 5, P.O.Box 1-764,\\
014700 Bucharest, Romania, E-mail: mircea.cimpoeas@imar.ro
\end{minipage}}

\vspace{2mm} \noindent {\footnotesize
\begin{minipage}[b]{15cm}
Florin Nicolae, Simion Stoilow Institute of Mathematics of the romanian Academy, P.O.Box 1-764,\\
014700 Bucharest, Romania, E-mail: florin.nicolae@imar.ro
\end{minipage}}
\end{document}